\newcommand{\argmin}{\mathop{\rm arg~min}\limits}
\newcommand{\prox}{\mathrm{prox}}
\newcommand{\fin}{\triangleleft}
\newcommand{\R}{\mathbb{R}}
\newcommand{\Z}{\mathbb{Z}}
\newtheorem{definition}{Definition}
\newtheorem{assumption}{Assumption}
\newtheorem{lemma}{Lemma}
\newtheorem{theorem}{Theorem}
\newtheorem{example}{Example}
\title{\LARGE \bf
Continuation Method for Nonsmooth Model Predictive Control Using Proximal Technique
}
\author{Ryotaro Shima$^{1}$, Ryuta Moriyasu$^{1}$, and Teruki Kato$^{1}$
\thanks{$^{1}$Ryotaro Shima, Ryuta Moriyasu, and Teruki Kato are with Toyota Central R\&D Labs., Inc.,
        Yokomichi 41-1, Nagakute, Aichi, Japan, 4801192.
        {\tt\small \{ryotaro.shima,moriyasu,teruki.kato.tg\} @mosk.tytlabs.co.jp}}%
}
\begin{document}

\bstctlcite{IEEEexample:BSTcontrol}  

\maketitle
\thispagestyle{empty}
\pagestyle{empty}

\begin{abstract}

This paper presents a novel framework for the continuation method of model predictive control based on optimal control problem with a nonsmooth regularizer.
Via the proximal operator, the first-order optimality inclusion relation is reformulated into an equation system, to which the continuation method is applicable.
In addition, we present constraint qualifications that ensure the well-posedness of the proposed equation system.
A numerical example is also presented that demonstrates the effectiveness of our approach.

\end{abstract}

\section{INTRODUCTION}

Model predictive control (MPC) has been widely employed in the industry because it admits nonlinearity of the plant model and constraints to the states and inputs \cite{mpc, automotive_mpc}.
MPC implicitly determines control input using the optimal control problem (OCP).
Thus, nonlinear MPC requires online optimization of input series and often causes lack of optimality in the calculated inputs because of computation time limitation.
Although most real-time control methods, such as iterative LQR \cite{iLQR2004} and shooting methods \cite{shooting}, cause tracking delay to the time-varying reference points, the continuation method \cite{continuation, cgmres} accomplishes perfect tracking owing to prediction of time evolution of the optimal inputs with a linear equation system.
Such an algorithm is also referred to as a feedforward algorithm \cite{dorfler} or a prediction-correction algorithm \cite{tvo-survey}.
The continuation method requires an equation system that defines the optimal solution; see Subsection \ref{subsection:continuation}.

Nonsmooth optimization \cite{nonsmooth_nesterov} sharpens the landscape of the evaluation function at specific points and arranges the optimal solution at the edges.
A prominent example of nonsmooth optimization is sparse modeling \cite{sparse_modeling, naga}, which employs the $\ell_1$ norm as a penalty.
Nonsmooth optimization has also been explored in the context of OCP \cite{sparse_ocp, sparse_hjb, explicit}, with applications to spacecraft trajectory planning \cite{sparse_ThreeBody} and humanoid robot control \cite{sparse_ddp}.

A difficulty in the nonsmoothness of the evaluation function is that the first-order optimality condition becomes an inclusion relation.
To address this difficulty, the epigraphic reformulation technique is often used \cite{explicit, epi}, although it increases the computational burden due to the addition of slack variables.
A recent study \cite{sparse_ddp} discussed the approximation of the $\ell_1$ norm with differentiable functions, even though it lacks sparsity due to the approximation.
Another successful remedy to circumvent the difficulty is the proximal operator \cite{prox}, which is the solution of a certain tractable nonsmooth subproblem and has been utilized to construct a fixed point algorithm.
Previously, motivated by inverse optimization, the authors \cite{letter_docp} proposed differentiation of the sparse optimal control problem using the proximal operator.

This paper presents a novel framework for applying the continuation method to MPC with a nonsmooth regularizer.
Using the proximal operator, the first-order optimality inclusion relation of the nonsmooth OCP is reformulated into an equality, to which the continuation method is applied.
Furthermore, certain constraint qualifications are provided that ensure the well-posedness of the obtained equality.
Our proposed method enables tracking the optimal solution of nonsmooth OCP by solving linear equation system.
Finally, a numerical example is provided to illustrate the effectiveness of our proposed method.


The remainder of this paper is organized as follows.
Section II presents the preliminary contents.
Section III presents the continuation methods for the nonsmooth model predictive control.
Section IV presents a numerical example of our proposed method with sparse control for a nonlinear plant and illustrates the effectiveness of our proposed method.
Section V presents the conclusions of this paper.

\subsubsection*{Notation}

We denote a set of all positive real numbers by $\R_>$;
all non-negative real numbers by $\R_\geq$;
a vector (or matrix) $[a_{(1)}^\top \ \ldots \ a_{(k)}^\top]^\top$ by $[a_{(1)};\ldots;a_{(k)}]$;
the $i$-th element of a vector $v$ by $v_i$;
an element number of the finite-element set $\mathcal{S}$ by $|\mathcal{S}|$;
$\{1,\cdots,n\} \subset \mathbb{Z}$ by $\mathbb{Z}_n$;
a vector $[v^1;\cdots;v^T] \in \R^{nT}$, where $v^1, \ldots, v^T \in \R^n$, by $v^{1:T}$;
Hadamard product of $v$ and $w$ by $v\odot w$;
the $\ell_1$ norm of a vector $v$, namely, $\sum_i |v_i|$ by $\| v \|_1$;
a set $\{v+a \in \R^n \, | \, a \in A\}$, where $v\in \R^n$ and $A \subset \R^n$, by $v+A$;
and a $|\mathcal{I}|$-dimensional vector consisting of those elements of the $n$-dimensional vector $v$ whose indices are in $\mathcal{I}$ by $v_\mathcal{I}$, namely, for $\mathcal{I}= \{2,4,5\}$, $v_\mathcal{I}=[v_2; v_4; v_5] \in \R^3$.
Regarding the derivatives, we denote the gradient of a scalar function $f: \R^n \to \R$ at $x\in \R^n$ by $\nabla f(x) \in \R^n$, the Jacobi matrix of a vector-valued function $g: \R^n \to \R^m$ at $x\in \R^n$ by $\frac{\partial f}{\partial x}(x) \in \R^{m\times n}$, and the subderivative (see Subsection \ref{subsection:nonsmooth}) of a scalar function $f: \R^n \to \R$ at $x\in \R^n$ by $\partial f (x)$.

\section{PRELIMINARIES}

\subsection{Continuation Method \cite{continuationbook, cgmres}}
\label{subsection:continuation}

The continuation method tracks a solution of a nonlinear equation system dependent on a parameter.
Let the state of the plant $x(t) \in \R^n$, where $t$ denotes time.
Consider the following equation system regarding $z\in\R^k$ dependent on $x$:
\begin{align}
  F(z, x) = 0  .
  \label{equations}
\end{align}
Note that the solution $z^\mathrm{sol}$ implicitly depends on $x(t)$.
Instead of solving \eqref{equations}, the continuation method memorizes $z$ and achieves $z(t) \to z^\mathrm{sol}(x(t))$ as $t \to +\infty$ by forcing $z(t)$ to satisfy the following ordinary differential equation (ODE):
\begin{align}
  \frac{\mathrm{d}F(z(t), x(t))}{\mathrm{d}t} = - \zeta_\mathrm{c} F(z(t), x(t))  ,
\end{align}
where $\zeta_\mathrm{c}>0$ is a constant.
Taking the total derivative of the left hand side, we obtain the following ODE:
\begin{align}
  \frac{\mathrm{d} z}{\mathrm{d} t} &= d_\mathrm{c}  ,
  \label{ode:continuation} \\
  \frac{\partial F}{\partial z}(z, x) d_\mathrm{c}
  &=
  - \frac{\partial F}{\partial x}(z, x) \frac{\mathrm{d} x}{\mathrm{d} t} - \zeta_\mathrm{c} F(z, x)  .
  \label{linear_eqns_continuation}
\end{align}
Note that \eqref{linear_eqns_continuation} is a linear equation system with respect to $d_\mathrm{c}$, which is much easier to solve than the nonlinear equation system \eqref{equations}.

The following two aspects about the continuation method must be remarked.
One is that the continuation method requires an \textit{equation system}.
In OCP whose evaluation function is differentiable, the equation system \eqref{equations} can be derived as the first-order optimality condition (KKT condition) which, however, turns out to be an inclusion relation in nonsmooth OCP; see Subsection \ref{subsection:nonsmooth}.
The other is that it requires \textit{regularity} of the Jacobi matrix of $F$ with respect to $z$.

\subsection{Nonsmooth Calculus \cite{hilbert}}
\label{subsection:nonsmooth}

This subsection presents the calculus of the nonsmooth function to denote the first-order optimality condition for the nonsmooth optimization problem.
To specify the scope of nonsmoothness, a closed convex proper function is defined.
\begin{definition}[closed convex proper function]
  Consider a function $f:\R^n\to \R\cup\{+\infty\}$ and the following set\footnote{
    The set $\mathrm{epi}(f)$ is called an epigraph of $f$.
    }:
    \begin{align}
      \mathrm{epi} (f) \coloneqq \left\{ [x;t] \in \R^{n+1} \, \middle| \, f(x)\leq t \right\}  \notag
    \end{align}
    $f$ is said to be closed, convex, and proper (ccp) if $\mathrm{epi} (f)$ is nonempty, closed, and convex.
    $\fin$
\end{definition}
Note that all convex functions whose ranges are within $\R$ are ccp.
Moreover, the ccp property admits some continuous non-differentiable functions, such as $f(x)=\|x\|_1$, $f(x)=\|x\|_2$, and their finite summation.
This paper denotes non-differentiable ccp functions by \textit{nonsmooth functions}.

We introduce a subderivative of a nonsmooth function.
\begin{definition}[subderivative]
  Consider a ccp function $f:\R^n\to \R\cup\{+\infty\}$.
  For $x\in \R^n$ satisfying $f(x) < +\infty$, the following set
  \begin{align*}
    \partial f (x) \coloneqq \{ g \in \R^n \, | \, f(y) \geq f(x) + g^\top (y-x) \  \forall y \in \R^n \}.
  \end{align*}
  is said to be the subderivative of the function $f$ at $x$.
  $\fin$
\end{definition}

\begin{example}
    \label{example:subderivative}
    $\partial f(x)$ is a singleton, $\{ \nabla f (x) \}$, if $f$ is differentiable at $x$.
    This implies the subderivative is a generalization of the derivative.
    For $f(x) = \|x\|_1$ and $x=[0;-2;3]$, we obtain $\partial f(x) = \{[a;-1;1] \, | \, a \in [-1,1] \}$.
    $\fin$
\end{example}

The following lemma states the first-order optimality function for nonsmooth optimization problem.
\begin{lemma} [Theorem 16.3 in \cite{hilbert}]
    Consider the following nonsmooth optimization problem:
    \begin{align}
        \min_{x \in \R^n} f(x) + r(x),
    \end{align}
    where $f$ is a differentiable function and $r$ is nonsmooth function.
    Then, the optimal solution $x^\ast$ satisfies  the following first-order optimality condition:
    \begin{align}
        \nabla f(x^\ast) + \partial r(x^\ast) \ni 0 .
    \end{align}
\end{lemma}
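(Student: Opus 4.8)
The plan is to unwind the desired inclusion through the definition of the subderivative and then exploit convexity of $r$ together with differentiability of $f$. Writing the conclusion $0 \in \nabla f(x^\ast) + \partial r(x^\ast)$ equivalently as $-\nabla f(x^\ast) \in \partial r(x^\ast)$, the definition of the subderivative shows that it suffices to establish the single scalar inequality $r(y) \geq r(x^\ast) - \nabla f(x^\ast)^\top (y - x^\ast)$ for every $y \in \R^n$. Thus the entire argument reduces to certifying that $-\nabla f(x^\ast)$ is an admissible subgradient of $r$ at $x^\ast$.

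To produce this inequality I would test optimality not at $y$ directly but at the convex combination $x_t \coloneqq x^\ast + t(y - x^\ast)$ for $t \in (0,1)$. Since $x^\ast$ minimizes $f + r$, we have $f(x^\ast) + r(x^\ast) \leq f(x_t) + r(x_t)$, and since $r$ is ccp, hence convex, $r(x_t) \leq (1-t)\, r(x^\ast) + t\, r(y)$. Substituting the second estimate into the first, cancelling $(1-t)\, r(x^\ast)$ from both sides, and dividing by $t>0$ yields
\begin{align}
  \frac{f(x^\ast) - f(x_t)}{t} \leq r(y) - r(x^\ast) . \notag
\end{align}
This is the crucial step: the convex combination lets convexity of $r$ linearize its contribution while $f$ is evaluated at a point approaching $x^\ast$.

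Finally I would let $t \to 0^{+}$. Differentiability of $f$ gives $\lim_{t\to 0^+} \big( f(x^\ast) - f(x_t) \big)/t = -\nabla f(x^\ast)^\top (y - x^\ast)$, so the displayed inequality passes to the limit and delivers exactly $r(y) \geq r(x^\ast) - \nabla f(x^\ast)^\top (y - x^\ast)$, which is the subderivative condition sought. The main obstacle is not any single estimate but arranging the combination correctly: one must perturb along the segment toward $y$ so that convexity of $r$ applies to $r(x_t)$, and one must rely on differentiability of $f$ (not merely convexity) to evaluate the one-sided limit as a genuine directional derivative. Everything else is routine cancellation.
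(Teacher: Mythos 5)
Your proof is correct. The paper states this lemma without any proof (it is the standard Fermat rule for a smooth-plus-convex composite, deferred to the cited reference on convex analysis), so there is no in-paper argument to compare against; your derivation is the classical one and fills that gap cleanly. The key steps all check out: testing optimality at $x_t = (1-t)x^\ast + t y$, using convexity of $r$ (which holds because the paper's ``nonsmooth function'' means a non-differentiable ccp function) to bound $r(x_t)$ by $(1-t)r(x^\ast)+t r(y)$, cancelling, dividing by $t$, and letting $t\to 0^+$ so that differentiability of $f$ turns the difference quotient into $-\nabla f(x^\ast)^\top(y-x^\ast)$. Two small points worth making explicit: the cancellation of $(1-t)r(x^\ast)$ requires $r(x^\ast)<+\infty$, which is automatic since $x^\ast$ is an optimal solution of a proper problem (and is also needed for $\partial r(x^\ast)$ to be defined per the paper's Definition 2); and when $r(y)=+\infty$ the target subgradient inequality holds trivially, so the argument need only be carried out for $y$ with $r(y)$ finite.
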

Note that, the nonsmoothness of $r$ requires the subderivative, and the first-order optimality condition is obtained in the form of an inclusion relation.

\subsection{Proximal Operator \cite{prox, naga, hilbert}}

In this paper we utilize the proximal operator to eliminate the inclusion relation expressed by subderivatives.
The proximal operator is a tractable nonsmooth subproblem.
\begin{definition}[proximal operator]
  The proximal operator with respect to $r$ and $\gamma$ is defined as follows:
  \begin{align}
    \prox_\gamma^r (v) \coloneqq \argmin_{x\in \R^n} r(x) + \frac{1}{2\gamma} \|x-v\|^2 ,
    \label{prox}
  \end{align}
  where $r: \R^n \to \R\cup\{+\infty\}$ and a constant $\gamma >0$.
  $\fin$
\end{definition}

\begin{example}
    If $r(x) = \|x\|_1$, then $\prox_\gamma^r (v) = S^\gamma (v)$, where the $i$-th element of $S^\gamma(v)$ corresponds to $\mathrm{sgn}(v_i)\max\{|v_i|-\gamma, 0\}$.
    See also Table 2 in \cite{prox_list} for the list of explicitly obtained proximal operators.
    $\fin$
\end{example}

The following lemma is our main method for elimination of the inclusion relation caused by the nonsmoothness.

\begin{lemma}
  \label{lemma:prox}
  Given a function $r: \R^n \to \R\cup\{+\infty\}$, assume that there exists $\gamma_\mathrm{c}\in \R_>$ such that the evaluation function of the optimization \eqref{prox} is ccp.
  Then, the following equivalence
  \begin{align}
    w \in \gamma \partial r(x^\ast)
    \Longleftrightarrow
    x^\ast = \prox_\gamma^r (x^\ast + w) .
    \label{equiv}
  \end{align}
  holds for any $\gamma$ satisfying $0 < \gamma < \gamma_\mathrm{c}$.
\end{lemma}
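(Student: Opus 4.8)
The plan is to characterize the fixed-point condition on the right-hand side of \eqref{equiv} through the first-order optimality condition of the proximal subproblem \eqref{prox}, and then match it term by term with the subderivative inclusion on the left-hand side. First I would set $v \coloneqq x^\ast + w$ and observe that $x^\ast = \prox_\gamma^r(v)$ holds if and only if $x^\ast$ is a global minimizer of the proximal objective
\begin{align}
  g(x) \coloneqq r(x) + \frac{1}{2\gamma}\|x-v\|^2 . \notag
\end{align}
Under the standing assumption there is a $\gamma_\mathrm{c}$ making $g$ ccp; I would first note that for every $\gamma$ with $0 < \gamma < \gamma_\mathrm{c}$ the objective remains ccp, since shrinking $\gamma$ merely adds the nonnegative convex quadratic term $\bigl(\tfrac{1}{2\gamma}-\tfrac{1}{2\gamma_\mathrm{c}}\bigr)\|x-v\|^2$ to an already ccp function. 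This guarantees the minimizer is well defined and, crucially, that the optimality condition below is both necessary and sufficient.

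Next I would apply the first-order optimality condition. Because $\tfrac{1}{2\gamma}\|x-v\|^2$ is differentiable with gradient $\tfrac{1}{\gamma}(x-v)$, the sum rule for subderivatives (valid without a constraint qualification when one summand is smooth) gives $\partial g(x) = \tfrac{1}{\gamma}(x-v) + \partial r(x)$. By convexity of $g$, the point $x^\ast$ is a global minimizer exactly when $0 \in \partial g(x^\ast)$; sufficiency here is immediate from the definition of the subderivative, since $0 \in \partial g(x^\ast)$ yields $g(y) \ge g(x^\ast)$ for all $y$, while necessity is the preceding first-order optimality lemma. Substituting the gradient, this reads
\begin{align}
  0 \in \tfrac{1}{\gamma}(x^\ast - v) + \partial r(x^\ast) , \notag
\end{align}
which rearranges to $v - x^\ast \in \gamma\,\partial r(x^\ast)$.

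Finally I would unwind the substitution $v = x^\ast + w$, so that $v - x^\ast = w$ and the inclusion becomes precisely $w \in \gamma\,\partial r(x^\ast)$; reading the chain of equivalences in both directions establishes \eqref{equiv}. The main obstacle I anticipate is the careful justification of the two-sided optimality characterization: the earlier first-order optimality lemma supplies only necessity for a general objective, so the convexity granted by the assumption is what is needed to also secure sufficiency and to invoke the subderivative sum rule cleanly. A secondary point worth spelling out is the closure of the ccp property under shrinking $\gamma$, which is exactly what lets the single threshold $\gamma_\mathrm{c}$ cover the whole interval $(0,\gamma_\mathrm{c})$.
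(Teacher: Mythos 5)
Your proof is correct and follows essentially the same route as the paper's: both identify the fixed-point condition $x^\ast=\prox_\gamma^r(x^\ast+w)$ with the first-order optimality condition $\tfrac{1}{\gamma}(v-x^\ast)\in\partial r(x^\ast)$ of the proximal subproblem and then substitute $v=x^\ast+w$. One small point of phrasing: the well-definedness and uniqueness of the minimizer for $\gamma<\gamma_\mathrm{c}$ follow not from the ccp property itself but from the strong convexity supplied by the strictly positive quadratic term $\bigl(\tfrac{1}{2\gamma}-\tfrac{1}{2\gamma_\mathrm{c}}\bigr)\|x-v\|^2$ you already exhibit, which is exactly the argument the paper makes explicit.
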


\begin{proof}
  Let $v\in \R^n$ and $x^\ast \coloneqq \prox_\gamma^r 
 (v)$.
  The first-order optimality condition of the optimization in \eqref{prox} is as follows:
  \begin{align}
    \frac{v - x^\ast}{\gamma} \in \partial r (x^\ast)
    \label{prox_optimality}
  \end{align}
  The inequality $\gamma < \gamma_\mathrm{c}$ implies that the optimization problem in \eqref{prox} is strongly convex.
  Thus, the problem \eqref{prox} has the unique optimal solution, and $x^\ast$ is the solution if and only if the optimality condition \eqref{prox_optimality} holds at $x^\ast$.
  The right hand side of the relation \eqref{equiv} is equivalent to \eqref{prox_optimality}.
\end{proof}



\section{NONSMOOTH MODEL PREDICTIVE CONTROL}

\subsection{Nonsmooth Optimal Control Problem}

Consider the following discrete-time state space model:
\begin{align}
  x^{k+1} = f(x^k, u^k)    \ \ \ (k=1,2,\ldots)  ,
  \label{ssm}
\end{align}
where $k$ is time step, $x^k \in \R^n, u^k \in \R^m$ are the state and input at $k$, and $f : \R^n\times \R^m \to \R^n$ is twice differentiable.
Let the initial state be $x^1$.
With the $n_\mathrm{i}$ inequality constraints and $n_\mathrm{e}$ equality constraints, we consider the following nonsmooth optimal control problem:
\begin{align}
\begin{aligned}
  &\min_{u^{1:T}, x^{2:T+1}} ~ \phi(x^{T+1}) + \sum_{k=1}^T \left \{ L(x^k,u^k) + r(u^k) \right \} \\
  & \mathrm{subject \ to}\ \eqref{ssm}, \ g(u^k) \leq 0, \ h(u^k) = 0, \ k=1,\ldots,T,
\end{aligned}
\label{P1}
\end{align}
where $\phi:\R^n\to\R$, $L:\R^n\times\R^m\to\R$, $g:\R^m\to\R^{n_\mathrm{i}}$, $h:\R^m\to\R^{n_\mathrm{e}}$ are twice differentiable and $r:\R^n \to \R\cup\{+\infty\}$ is nonsmooth.
Throughout this paper, we suppose that the proximal operator of $r$ is obtained explicitly.

For convenience, define a function $H$ as follows:
\begin{align}
\begin{aligned}
  &H(x, u, \mu, \nu, p) \\
  &\coloneqq L(x,u) + \mu^\top g(u) + \nu^\top h(u) + p^\top f(x,u)  .
  \label{def_H}
\end{aligned}
\end{align}
Let the optimal control input be $\bar{u}^{1:T}$.
Then, the state $\bar{x}^{2:T+1}$ and the conjugate state $\bar{p}^{2:T+1}$ are algebraically determined by the following state space equation and conjugate equation:
\begin{gather}
  \bar{x}^{k+1} = f(\bar{x}^k, \bar{u}^k) \ \ (k=1,\ldots,T) ,
  \label{opt_ssm}
  \\
  \begin{multlined}
    \bar{p}^k = \nabla_x L (\bar{x}^k, \bar{u}^k) +
    \left (
      \frac{\partial f}{\partial x} (\bar{x}^k, \bar{u}^k)
    \right )^\top p^{k+1} \\
    (k=2,\ldots,T) ,  
  \end{multlined}
  \label{opt_adj}
  \\
  \nabla \phi (x^{T+1}) = p^{T+1} .
  \label{opt_terminal}
\end{gather}
We denote the dependency by $x^k(z,x^1), p^k(z,x^1)$.
Let the Lagrange multipliers of the inequality and equality constraints be $\mu^{1:T}, \nu^{1:T}$, respectively.
We denote the design variables by $z \coloneqq [u^{1:T}; \mu^{1:T}; \nu^{1:T}]$.
The following theorem is one of our main results.
\begin{theorem}
\label{thm_equation}
Consider the nonsmooth OCP \eqref{P1} and its optimal input $\bar{u}^{1:T}$.
Then, the state $x^{2:T+1}$ and the conjugate state $p^{2:T+1}$ are determined by $z$ and $x^1$ as in \eqref{opt_ssm}, \eqref{opt_adj} and \eqref{opt_terminal}.
Moreover, consider $\gamma \in \R_>$ satisfying the assumption in Lemma \ref{lemma:prox}.
Consider an arbitrary complementarity function\footnote{
  A function $\psi:\R\times\R \to \R$ is said to be a complementarity function if $\psi(a,b)=0\Leftrightarrow a\geq 0,b\geq 0,ab=0$ holds.
} $\psi$.
Then, there exists $\mu^{1:T}, \nu^{1:T}$ such that the following equation systems holds:
\begin{gather}
  c^{1:T}(z, x^1) = 0 ,
  \label{eq:F}\\
  c^k(z, x^1) \coloneqq \begin{bmatrix}
    u^k - \prox_\gamma^r
  \left (
    u^k - \gamma J^k (z,x^1)
  \right )\\
  \Psi(-g(u^k), \mu^k)\\
  h(u^k)
  \end{bmatrix}
  ,\label{eq:Fk}\\
  J^k (z,x^1) \coloneqq \nabla_u H(x^k(z,x^1), u^k, \mu^k, \nu^k, p^{k+1}(z,x^1))
  , \notag\\
  \Psi(g,\eta) \coloneqq [\psi(g_1, \eta_1) ; \ldots ; \psi(g_{n_\mathrm{i}}, \eta_{n_\mathrm{i}})] , \notag
\end{gather}
where $H$ is defined in \eqref{def_H}.
\end{theorem}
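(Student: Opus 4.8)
The plan is to write down the Karush--Kuhn--Tucker (KKT) conditions of the nonsmooth OCP \eqref{P1} and then convert each piece into one of the three blocks of $c^k$ in \eqref{eq:Fk}. First I would form the Lagrangian by adjoining the dynamics \eqref{ssm} through the conjugate states $p^{2:T+1}$, the inequality constraints through $\mu^{1:T}$, and the equality constraints through $\nu^{1:T}$:
\begin{align*}
  \mathcal{L} &= \phi(x^{T+1}) + \sum_{k=1}^T \left\{ L(x^k,u^k) + r(u^k) \right\} \\
  &\quad + \sum_{k=1}^T (p^{k+1})^\top \left( f(x^k,u^k) - x^{k+1} \right) \\
  &\quad + \sum_{k=1}^T \left\{ (\mu^k)^\top g(u^k) + (\nu^k)^\top h(u^k) \right\} .
\end{align*}
Differentiating $\mathcal{L}$ with respect to $x^k$ for $k=2,\ldots,T$ and with respect to $x^{T+1}$ recovers the conjugate equation \eqref{opt_adj} and the terminal condition \eqref{opt_terminal}; together with the dynamics \eqref{opt_ssm} these determine $x^{2:T+1}$ and $p^{2:T+1}$ as functions of $z$ and $x^1$, establishing the first assertion and the dependency $x^k(z,x^1), p^k(z,x^1)$.

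Next I would treat stationarity in the inputs. Because $r$ is the only nonsmooth term while $L, g, h, f$ are differentiable with real values, the subdifferential sum rule yields the inclusion $0 \in \nabla_u H(x^k, u^k, \mu^k, \nu^k, p^{k+1}) + \partial r(u^k)$, i.e.\ $-J^k(z,x^1) \in \partial r(u^k)$, with $H$ and $J^k$ as in \eqref{def_H} and \eqref{eq:Fk}. The existence of the multipliers $\mu^{1:T}, \nu^{1:T}$ asserted in the statement is precisely the claim that KKT multipliers exist at the optimizer $\bar{u}^{1:T}$, which I would obtain from the nonsmooth necessary optimality conditions under an appropriate constraint qualification.

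To eliminate the inclusion I would apply Lemma \ref{lemma:prox} with $x^\ast = u^k$ and $w = -\gamma J^k$: scaling $-J^k \in \partial r(u^k)$ by $\gamma$ gives $-\gamma J^k \in \gamma\,\partial r(u^k)$, so the equivalence \eqref{equiv} converts it to $u^k = \prox_\gamma^r(u^k - \gamma J^k)$, which is the first block of $c^k$. For the inequality constraints, the KKT conditions require $g(u^k) \leq 0$, $\mu^k \geq 0$, and $\mu^k \odot g(u^k) = 0$; by the defining property of the complementarity function $\varphi$, these hold for each index $i$ if and only if $\varphi(-g_i(u^k), \mu^k_i) = 0$, which assembles into the second block $\psi(-g(u^k), \mu^k)$. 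The equality constraint $h(u^k) = 0$ supplies the third block directly, so stacking the three blocks over $k = 1,\ldots,T$ yields $c^{1:T}(z,x^1) = 0$.

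The step I expect to be the main obstacle is the justification of the input stationarity inclusion together with the accompanying existence of multipliers: one must verify that the sum rule applies to $L + \mu^\top g + \nu^\top h + p^\top f + r$ (valid since $r$ is ccp and the remaining terms are real-valued and differentiable) and that a suitable constraint qualification secures the KKT multipliers at $\bar{u}^{1:T}$. Once this is in place, the prox reformulation via \eqref{equiv} and the complementarity encoding via $\varphi$ are essentially definitional.
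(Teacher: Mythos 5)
Your proposal is correct and follows essentially the same route as the paper: write the first-order (KKT) conditions with the subdifferential inclusion $0 \in J^k + \partial r(u^k)$, convert that inclusion to the fixed-point equation $u^k = \prox_\gamma^r(u^k - \gamma J^k)$ via Lemma \ref{lemma:prox}, and encode the complementarity conditions through $\varphi$. The paper's proof simply asserts the KKT conditions directly rather than deriving them from the Lagrangian, and (like you) it defers the constraint-qualification discussion to the uniqueness result in Theorem \ref{thm_optimality}.
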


\begin{proof}
The first-order optimality condition for \eqref{P1} claims that there exists a Lagrange multiplier satisfying the following relation:
\begin{align}
    \partial r(u^k)
    + J(z,x^1)
    \ni 0 \ \
    &(k=1,\ldots,T) ,  
  \label{opt_u}
  \\
    g(u^k) \leq 0, \ \mu^k \geq 0, \ g(u^k) \odot \mu^k =0 \ \
    &(k=1,\ldots,T) ,
  \label{opt_inequality}
  \\
  h(u^k) = 0 \ \ 
  &(k=1,\ldots,T) .
  \label{opt_equality}
\end{align}
Owing to Lemma \ref{lemma:prox}, \eqref{opt_u} is reformulated into $ u^k - \prox_\gamma^r ( u^k - \gamma J(z, x^1) ) = 0 $ .
Moreover, \eqref{opt_inequality} is the complementarity condition and is reformulated into $\Psi(-g(u^k), \mu^k) = 0$ from the definition of a complementarity function.
\end{proof}

\subsection{Nonsmooth MPC by Continuation Method}

By selecting $c^{1:T}$ in \eqref{eq:F} as $F$ in \eqref{equations}, we establish a continuation method for nonsmooth MPC.
At the sampling time, the state of the plant is observed, and simultaneously the control input is injected into the plant.
We denote the state and input by $x^\mathrm{obs}$ and $u^\mathrm{in}$, respectively, and the predicted state at the next sampling time by $x^\mathrm{pred} \coloneqq f(x^\mathrm{obs},u^\mathrm{in})$.
Using the continuation method, the MPC controller updates the variable as $z \gets z+d_\mathrm{c}$, where $d_\mathrm{c}$ is the solution of the following linear equation system:
\begin{align}
  \begin{aligned}
    & \frac{\partial F}{\partial z}(z, x^\mathrm{obs}) d_\mathrm{c}
    \\
    & = - \frac{\partial F}{\partial x^1}(z, x^\mathrm{obs})(x^\mathrm{pred} - x^\mathrm{obs}) - \zeta_\mathrm{c} F(z, x^\mathrm{obs})  
  \end{aligned}
  \label{iter:continuation}
\end{align}

An algorithm of nonsmooth MPC using the continuation method is described in Algorithm \ref{alg1}.
What should be remarked is that the typical techniques to reduce the computational burden of solving linear equation system are also applicable to this algorithm.
For instance, the C/GMRES method \cite{cgmres} can efficiently solve \eqref{iter:continuation}.

\begin{figure}[t]
  \begin{algorithm}[H]
    \caption{Example of nonsmooth MPC using continuation method.}
    \label{alg1}
    \begin{algorithmic}[1]
      \REQUIRE initial value of $z \coloneqq [u^{1:T}; \mu^{1:T}; \nu^{1:T}]$ and iteration number $i_\mathrm{max}$ of Newton method
      \STATE use $u^1$ as a control input and memorize the observed state $x^\mathrm{obs}$.  \label{step:plant}
    \STATE Determine $x^{2:T+1}$ by \eqref{ssm} and $p^{2:T+1}$ by \eqref{opt_adj}, \eqref{opt_terminal}.
    \STATE Determine $d_\mathrm{c}$ by solving \eqref{iter:continuation}.
    \STATE $x^1 \gets f(x^\mathrm{obs}, u^1), \ z \gets z + d_\mathrm{c}$.
    \STATE Go Step \ref{step:plant} at sampling time.
    \end{algorithmic}
  \end{algorithm}
  \vspace{-15pt}
\end{figure}

\subsection{Well-posedness of Linear Equation System}

This subsection discusses the well-posedness of \eqref{iter:continuation}.

\subsubsection{Differentiability of $F$}

The linear equation system \eqref{iter:continuation} requires the differentiability of $F$, namely, $\prox_\gamma^r$ and $\Psi$ in \eqref{eq:Fk}.
Regarding $\prox_\gamma^r$, the following lemma holds.
\begin{lemma}
Under the assumption in Lemma \ref{lemma:prox}, $\prox_\gamma^r(v)$ is differentiable at almost all $v\in \R^n$.
\end{lemma}
\begin{proof}
$\prox_\gamma^r(v)$ is cocoercive (see Example 22.7 in \cite{hilbert}) and therefore Lipschitz (see Example 22.7 in \cite{hilbert}).
Thus, Rademacher's theorem implies the statement.
\end{proof}

Regarding $\Psi$, when $\psi$ is Fischer--Burmeister function $\psi_\mathrm{FB}(a,b) \coloneqq a+b-\sqrt{a^2+b^2}$, it is differentiable almost everywhere.
Therefore, the differentiability does not cause a practical problem.
One theoretical remedy is to replace the derivative of $F$ in \eqref{iter:continuation} with an element of the Clarke subdifferential, under the assumption that $F$ is locally Lipschitz.

\subsubsection{Regularity of Jacobi matrix}

The linear equation system \eqref{iter:continuation} is feasible as long as the Jacobi matrix is regular.
The regularity requires the uniqueness of the solution of the nonlinear equation system \eqref{eq:F}.
The uniqueness of $u^{1:T}$ is guaranteed if the OCP \eqref{P1} is strongly convex.
In the following, we discuss the uniqueness of Lagrange multipliers.

Consider $u \in \R^m$ satisfying the inequality and equality constraints.
We denote an index set of active constraints by $\mathcal{I}(u) \coloneqq \{ i \in \Z_{n_\mathrm{i}} \mid g_i(u) = 0 \}$.
Define a cone $N(u)$ and a matrix-valued function $J(u)$ as follows:
\begin{align}
  N (u) &\coloneqq
  \left \{
    J (u) ^\top [\mu ; \nu]
    \in \R^m
    \, \middle | \,
    \begin{aligned}
        \mu &\in \R^{n_\mathrm{i}}, \ 
        \nu \in \R^{n_\mathrm{e}}, \\
        \mu_i &\geq 0 \ \ \mathrm{if} \ i \in \mathcal{I}(u), \\
        \mu_i &= 0 \ \ \mathrm{if} \ i \notin \mathcal{I}(u)    
    \end{aligned}
  \right \}
  ,
  \\
  J (u) &\coloneqq
  \left [
    \frac{\partial g}{\partial u} (u) ; \frac{\partial h}{\partial u} (u)
  \right ]  .
\end{align}

Let us assume linear independence constraint qualification.
\begin{assumption}[LICQ]
  \label{cq:licq}
  The vectors $\nabla g_i(u), \ i \in \mathcal{I}(u)$, $\nabla h_i(u), \ i \in \Z_{n_\mathrm{e}}$ are linearly independent.
  $\fin$
\end{assumption}

In addition, consider the following constraint qualification that implies independence of $\partial r(u)$ and $N(u)$.
\begin{assumption}
  \label{cq:cap}
  Each element $z \in \partial r(u) + N(u)$ is uniquely decomposed into a sum $z = v + w, \, v \in \partial r(u), \, w \in N(u)$.
  $\fin$
\end{assumption}

\begin{example}
    Let $m=1$ and $r(u) = \|u\|_1$, and consider a box constraint $-1 \leq u \leq 1$, which is realized by $g(u) = [u-1; -u-1]$.
    In this case, Assumption \ref{cq:cap} holds at any point $u$.
    For instance, if $\bar{u}=0$, then $\partial r(\bar{u}) = [-1,1]$ (see also Example \ref{example:subderivative}), $N(\bar{u}) = \{0\}$, and $\partial r(\bar{u}) + N(\bar{u}) = [-1,1]$, and consequently $z \in \partial r(\bar{u}) + N(\bar{u})$ implies $z \in \partial r(\bar{u})$ and $0 \in N(\bar{u})$.
    In addition, if $\bar{u}=1$ then $\partial r(\bar{u}) = \{1\}$, $N(\bar{u}) = \{a\,| \, a \geq 0\}$, and $\partial r(\bar{u}) + N(\bar{u}) = \{a\,| \, a \geq 1\}$, and consequently $z \in \partial r(\bar{u}) + N(\bar{u})$ implies $1 \in \partial r(\bar{u})$ and $z-1 \in N(\bar{u})$.
    $\fin$
\end{example}

\begin{theorem}
  \label{thm_optimality}
  Suppose Assumptions \ref{cq:licq} and \ref{cq:cap} hold for each $\bar{u}^1, \ldots, \bar{u}^T$, where $\bar{u}^{1:T}$ is the optimal control input of \eqref{P1}.
  Then, under the assumption in Theorem \ref{thm_equation}, there uniquely exist Lagrange multipliers $\mu^{1:T}, \nu^{1:T}$ satisfying \eqref{eq:F}.
\end{theorem}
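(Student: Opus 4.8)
The plan is to take existence for granted from Theorem \ref{thm_equation} and devote the argument to uniqueness, handling each time step $k$ separately. First I would translate the three blocks of $c^k(z,x^1)=0$ into more usable form: by Lemma \ref{lemma:prox}, the first block is equivalent to the inclusion $-J^k(z,x^1)\in\partial r(\bar u^k)$; the complementarity block $\psi(-g(\bar u^k),\mu^k)=0$ forces $\mu^k_i=0$ for every inactive index $i\notin\mathcal I(\bar u^k)$ and $\mu^k_i\ge 0$ on $\mathcal I(\bar u^k)$; and the last block merely reasserts feasibility. Because the conjugate states $\bar p^{2:T+1}$ are fixed by \eqref{opt_adj}--\eqref{opt_terminal} independently of the multipliers, I would split $J^k=s^k+w^k$, where $s^k\coloneqq\nabla_u L(\bar x^k,\bar u^k)+(\partial f/\partial u)^\top\bar p^{k+1}$ is a constant vector and $w^k\coloneqq J(\bar u^k)^\top[\mu^k_{\mathcal I(\bar u^k)};\nu^k]\in N(\bar u^k)$ collects the multiplier contribution, the inactive terms having been eliminated by complementarity.

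With this splitting the optimality inclusion reads $-s^k=v^k+w^k$ with $v^k\coloneqq -s^k-w^k\in\partial r(\bar u^k)$ and $w^k\in N(\bar u^k)$; that is, $-s^k\in\partial r(\bar u^k)+N(\bar u^k)$, the membership itself being guaranteed by the existence part of the claim. Now suppose two multiplier tuples satisfy \eqref{eq:F}, yielding two admissible decompositions $-s^k=v^k+w^k=\tilde v^k+\tilde w^k$. Assumption \ref{cq:cap} states that such a decomposition is unique, hence $w^k=\tilde w^k$, i.e. $J(\bar u^k)^\top[\mu^k_{\mathcal I};\nu^k]=J(\bar u^k)^\top[\tilde\mu^k_{\mathcal I};\tilde\nu^k]$. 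I would then invoke Assumption \ref{cq:licq}: row-full-rankness of $J(\bar u^k)$ makes $J(\bar u^k)^\top$ injective, so $[\mu^k_{\mathcal I};\nu^k]=[\tilde\mu^k_{\mathcal I};\tilde\nu^k]$; together with the vanishing of the inactive components forced by complementarity, this gives $\mu^k=\tilde\mu^k$ and $\nu^k=\tilde\nu^k$. Running this over $k=1,\ldots,T$ completes the uniqueness proof.

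The main obstacle I anticipate lies in the nonsmooth step rather than the linear-algebra step: one must correctly recognize that the pair consisting of the proximal equation and the complementarity condition is exactly equivalent to the conic inclusion $-s^k\in\partial r(\bar u^k)+N(\bar u^k)$, and that Assumption \ref{cq:cap} is precisely the hypothesis isolating the subdifferential part $v^k$ from the normal-cone part $w^k$. Once $w^k$ is pinned down, the smooth LICQ argument recovers the multipliers routinely. A minor care point is to verify that eliminating inactive multipliers through complementarity genuinely reduces the multiplier term to the active-index form used in the definitions of $J(u)$ and $N(u)$, so that Assumption \ref{cq:cap} is applied with the correct cone.
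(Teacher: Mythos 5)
Your proposal is correct and follows essentially the same route as the paper's own proof: rewrite the optimality condition at each $k$ as the conic inclusion $a^k \in \partial r(\bar u^k)+N(\bar u^k)$ (your $-s^k$ is the paper's $a^k$), use Assumption \ref{cq:cap} to pin down the normal-cone component $w^k$, and then use LICQ (injectivity of $J(\bar u^k)^\top$) together with complementarity killing the inactive multipliers to recover $\mu^k,\nu^k$ uniquely. Your treatment of the inactive indices is in fact slightly more explicit than the paper's, but the argument is the same.
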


\begin{proof}
  The first-order optimality condition \eqref{opt_u}--\eqref{opt_equality} is also expressed as follows:
  \begin{gather}
    \partial r(\bar{u}^k) + N(\bar{u}^k)
    \ni a^k \ \ (k=1,\ldots,T), \\
    a^k \coloneqq - \nabla_u L (\bar{x}^k, \bar{u}^k)
    - \left (
      \frac{\partial f}{\partial u} (\bar{x}^k, \bar{u}^k)
    \right )^\top \bar{p}^{k+1}
  \end{gather}
  Assumption \ref{cq:cap} implies that, for each $z^k$, there uniquely exist $v^k \in \partial r (\bar{u}^k)$ and $w^k \in N(\bar{u}^k)$ such that $v^k + w^k = z^k$.
  Assumption \ref{cq:licq} implies, for each $w^k \in N(\bar{u}^k)$, there uniquely exist $\mu^k \in \R^{n_\mathrm{i}}, \nu^k \in \R^{n_\mathrm{e}}$ such that \eqref{opt_inequality}, \eqref{opt_equality}, and $w^k = J (\bar{u}^k) ^\top [\mu ; \nu]$ hold.
  Therefore, the Lagrange multipliers $\mu^k, \nu^k$ satisfying the first-order optimality condition are unique.
\end{proof}


\section{NUMERICAL EXAMPLE: SPARSE MPC}

%
%

\subsection{Problem Setting}

Let the sampling period be $\varDelta t=0.05$, and consider the state space model \eqref{ssm} determined by the following function $f$:
\begin{gather*}
  f(x,u) = x + \varDelta t \tilde{f}(x) + \varDelta t Bu, \\
  \tilde{f}(x) \coloneqq \begin{bmatrix}
    x_3\\
    x_4\\
    -0.1x_1 - 0.1\cosh(0.1x_2)x_3\\
    -0.2x_2 - 0.2\cosh(0.1x_1)x_4 + 0.1x_4\\
    -0.3x_5 + \tanh(x_3) + \tanh(x_4)
  \end{bmatrix} ,  \\
  B \coloneqq \begin{bmatrix}
    0 & 0 & 1 & 0 & 0\\
    0 & 0 & 0 & 1 & 0
  \end{bmatrix}^\top .
\end{gather*}
Let the initial state be $x=[6;-8;3;-2;5]$.
Consider no equality constraints of the input but the upper and lower bounds of inputs $-1 \leq u_i \leq 1 \ (i=1,2)$.
Let $T=60$ and consider the following cost functions $L, \phi$:
\begin{align}
  L(x,u) = \frac{1}{2} x^\top x + u^\top u , \ 
  \phi(x) = \frac{1}{10} x^\top x .
\end{align}
To induce sparsity to the control input, we set the nonsmooth regularizer $r$ to be $4\|\cdot\|_1$ and select $\gamma=0.5$.
Note that $\prox_\gamma^r (v) = S^\beta (v)$, where $\beta=\gamma/4$.

\subsection{Conventional Method for Comparison}

A paper \cite{sparse_ddp} proposed an equalization of optimality condition by approximating the nonsmooth function $r$ by differentiable function $\tilde{r}$.
More specifically, instead of \eqref{opt_u}, it considers the following equation:
\begin{align}
  \nabla \tilde{r}(u^k)
  + J^k(z, x^1)
  = 0 \ \ (k=1,\ldots,T)  .
\end{align}
Thus, instead of \eqref{eq:F}, the following equation system is utilized for continuation method:
\begin{align}
  \tilde{c}^{1:T}(z,x_1)=0 ,  \label{conv_equation} \\
  \tilde{c}^k(z, x^1) \coloneqq \begin{bmatrix}
    \nabla \tilde{r}(u^k) + J^k (z,x^1)\\
    \Psi(-g(u^k), \mu^k)\\
    h(u^k)
  \end{bmatrix}
\end{align}
In this numerical example, $r(u)=4\|u\|_1$ is replaced with a differentiable function $\tilde{r}(u) = 4\varepsilon \sum_{i=1}^2 \log\cosh(u_i/\varepsilon), \varepsilon=10^{-2}$.

\subsection{Results of Control and Discussion}

In both our proposed method and the conventional method, we update $z$ using the solution of the linear equation system \eqref{iter:continuation} with $\zeta_\mathrm{c}=0.4$, and additionally update it using the Newton method with step size $0.8$.
We also let the complementarity function be the Fischer--Burmeister function $\psi_\mathrm{FB}(a,b) \coloneqq a+b-\sqrt{a^2+b^2}$.
Initialization of $z$ is conducted using the Newton method.

The control inputs calculated by our proposed method and the conventional method are depicted in Fig.~\ref{fig:inputs}, whereas those by the conventional method are depicted in Fig.~\ref{fig:inputs_conv}.
Our proposed method attained sparse control, whereas the inputs by the conventional method did not produce zero inputs but produced approximately zero inputs.
Furthermore, our proposed method switched off $u_1$ at around $t=4.5$ and $u_2$ at around $t=3$, whereas the inputs calculated by the conventional method fluctuates at that time.

\begin{figure}[t]
  \centering
  \includegraphics[width=0.8\linewidth]{./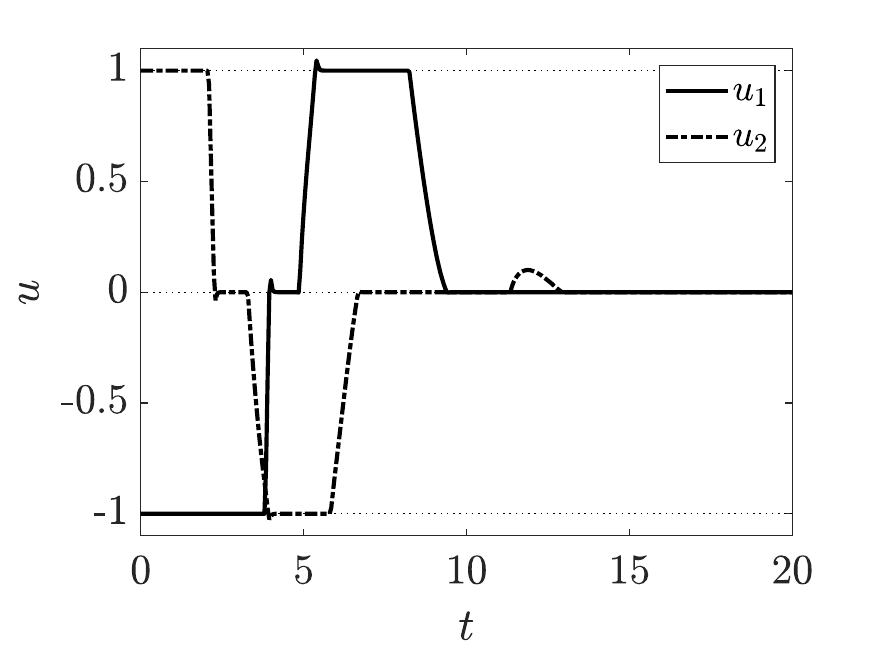}
  \caption{Inputs calculated by the proposed method.  \label{fig:inputs}}
\end{figure}

\begin{figure}[t]
  \centering
  \includegraphics[width=0.8\linewidth]{./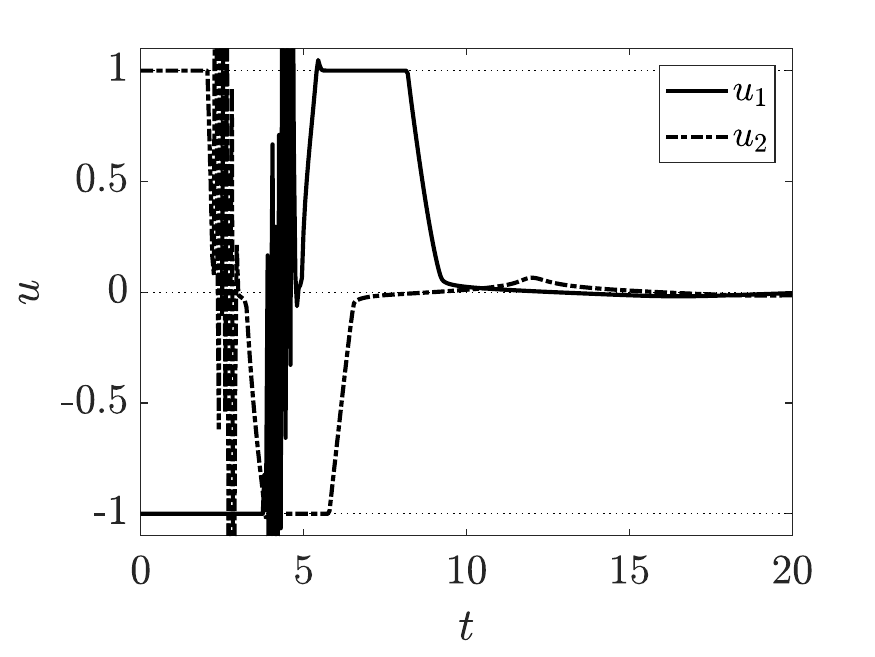}
  \caption{Inputs calculated by the conventional method.  \label{fig:inputs_conv}}
\end{figure}

Our method is comparably effective for nonsmooth MPC because it has no tradeoff inherent to the conventional method.
Due to approximation, the conventional method loses the sparsity structure, which is intended when the regularizer is introduced.
Furthermore, regarding the stability of the input calculation, the conventional method deteriorates the condition number of the linear equation system \eqref{linear_eqns_continuation} around the nonsmooth points because the slope of the cost function changes drastically there.
Therefore, the accuracy of the approximation is selected under the tradeoff between the loss of sparsity and the instability of calculation.
Meanwhile, our proposed method accomplishes sparse control because it does not approximate the $\ell_1$ norm.
In addition, in our proposed method, the proximal operator switches off the unnecessary directions of the solution of \eqref{linear_eqns_continuation} and hence causes no such deterioration of condition number.
Therefore, our proposed method does not inherit the tradeoff of the conventional method.

\begin{figure}[t]
  \centering
  \includegraphics[width=0.8\linewidth]{./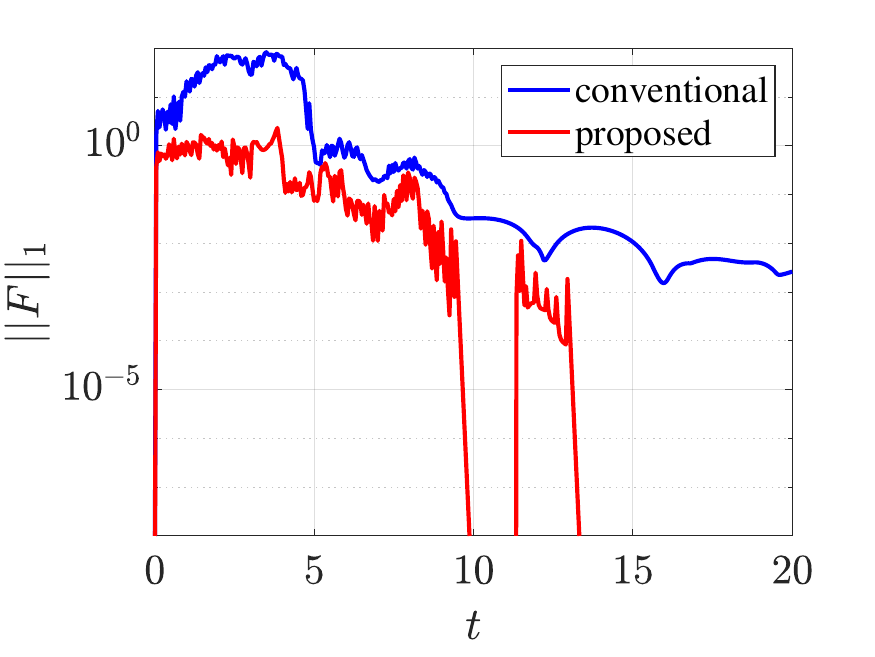}
  \caption{Residual of the equation system.  \label{fig:fval}}
\end{figure}

\begin{figure}[t]
  \centering
  \includegraphics[width=0.8\linewidth]{./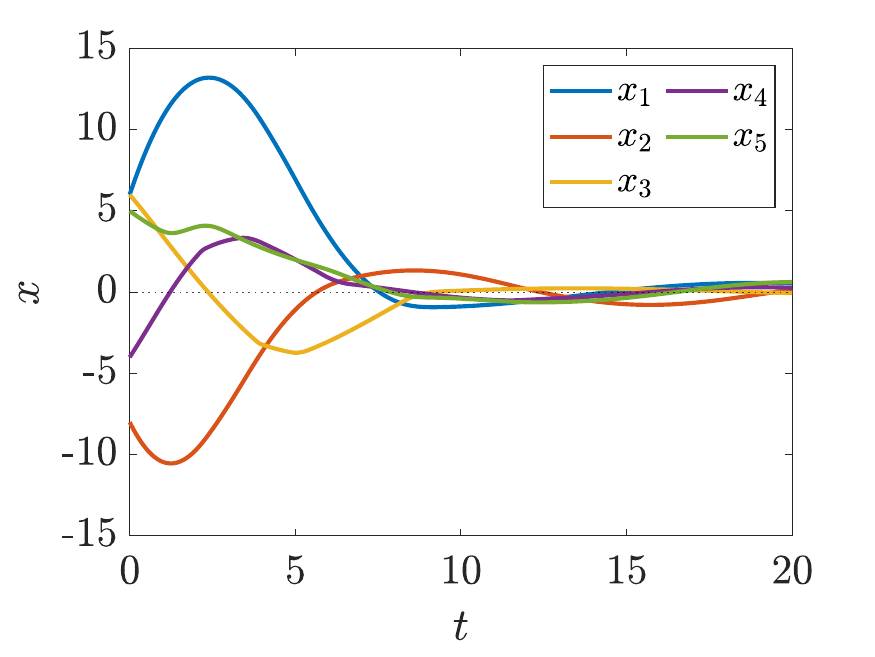}
  \caption{Trajectory of states controlled by the proposed method.  \label{fig:states}}
\end{figure}

What should be remarked in addition is that our proposed method spends as much time as the conventional method to solve the linear equation system \eqref{linear_eqns_continuation}.
The dimension of the linear equation system \eqref{linear_eqns_continuation} is the same in both our proposed method and the conventional method since, unlike \cite{explicit}, our proposed method does not introduce any slack variables to address the nonsmoothness.

A logarithmic plot of the $\ell_1$ norm of $c^{1:T}(z)$ and $\tilde{c}^{1:T}(z)$ (which is also referred to as $F(z)$ in \eqref{equations}) is depicted in Fig.~\ref{fig:fval}, which illustrates that our method has less residual than the conventional method.
Note that the reason why both control inputs violate the box constraint is because the continuation method calculates inputs in a suboptimal manner; note that it achieves $z(t) \to z^\mathrm{sol}(x(t))$ instead of solving \eqref{equations} (see also Subsection \ref{subsection:continuation}).
In practice, the calculated input can simply be clipped to the box to satisfy the constraint.

The trajectory of states controlled by our proposed method is depicted in Fig.~\ref{fig:states}.
The states almost converged to the origin at $t=20$.
Moreover, because the plant system is asymptotically stable under zero input, the state will converge to the origin after $t=20$.

\section{CONCLUSION}

This paper proposed a novel framework for the continuation method of nonsmooth MPC.
Via the proximal operator, we proposed an equation system which is equivalent to the first-order inclusion relation induced by the nonsmoothness of the OCP.
Moreover, constraint qualifications were also presented that ensure the uniqueness of the Lagrange multipliers satisfying the proposed equation system.
Finally, we presented a numerical example to show that our proposed method outperforms the conventional method.

\bibliographystyle{IEEEtran}
\bibliography{references}





\end{document}